\newtheorem{theorem}{Theorem}
\theoremstyle{plain}
\newtheorem{corollary}{Corollary}
\newtheorem{definition}{Definition}
\newtheorem{lemma}{Lemma}
\newtheorem{remark}{Remark}
\numberwithin{equation}{section}
\begin{document}
\title{Some Integral Inequalities via Caputo and Liouville Fractional
Integral Operators for $m-$convex functions}
\author{M.Emin \"{o}zdemir }
\email{eminozdemir@uludag.edu.tr}
\curraddr{Bursa Ulu\i da\u{g} \"{U}niversity, Department of Mathematics\\
and Science Education , Bursa-T\"{u}rkiye}
\subjclass[2020 Mathematics Subject Classification]{ Primary 26A33, 26A51;
Secondary 05A15, 15A18}
\keywords{Caputo fractional derivative, $m-$Convex , Classical inequalities,
H\"{o}lder ineq.}
\thanks{This paper is in final form and no version of it will be submitted
for publication elsewhere.}

\begin{abstract}
This short study consists of two parts, firstly we obtain some inequalities
on Caputo Fractional derivatives using the elementary inequalities. Secondly
we establish several new inequalities including Caputo fractional
derivatives for $m-$Convex functions. In general, in this work we obtain
upper bounds for the left sides of Lemma 1$\left[ 10\right] $ and lemma 2$%
\left[ 20\right] $ .
\end{abstract}

\maketitle

\section{Introduction}

In mathematical analysis, we know roughly that the classical concept of
derivative can be expressed in a single way as the limit of the slopes of
secant lines for $\vartriangle x\rightarrow 0.$ When it comes to Fractional
Derivatives (FC ),

Caputo left-sided derivative%
\begin{equation*}
^{C}D_{^{a^{+}}}^{\alpha }\left[ f\right] (x)=\frac{1}{\Gamma \left(
n-\alpha \right) }\dint_{a}^{x}\left( x-\xi \right) ^{n-\alpha -1}\frac{%
d^{n}\left( f\left( \xi \right) \right) }{d\xi ^{n}}dt,\;\;x>a
\end{equation*}

Caputo right-sided derivative

\begin{equation*}
^{C}D_{^{b-}}^{\alpha }\left[ f\right] (x)=\frac{\left( -1\right) ^{n}}{%
\Gamma \left( n-\alpha \right) }\dint_{x}^{b}\left( \xi -x\right) ^{n-\alpha
-1}\frac{d^{n}\left( f\left( \xi \right) \right) }{d\xi ^{n}}dt,\;\;x<b
\end{equation*}%
it has to do with the concept of tangent.\ As can be seen,\ In Caputo, she
first calculated the derivative of the integer order and then the integral
of the non-integer order.

Liouville left-sided derivative%
\begin{equation*}
I_{^{a^{+}}}^{\alpha }\left[ f\right] (x)=\frac{1}{\Gamma \left( n-\alpha
\right) }\frac{d^{n}}{dx^{n}}\dint_{a}^{x}\left( x-\xi \right) ^{n-\alpha
-1}f\left( \xi \right) d\xi ,\;\;x>a
\end{equation*}

Liouville right-sided derivative%
\begin{equation*}
I_{^{b-}}^{\alpha }\left[ f\right] (x)=\frac{\left( -1\right) ^{n}}{\Gamma
\left( n-\alpha \right) }\frac{d^{n}}{dx^{n}}\dint_{x}^{b}\left( \xi
-x\right) ^{n-\alpha -1}f\left( \xi \right) d\xi ,\;\;x<b
\end{equation*}

In Liouville, the opposite of the process in Caputo is valid. That is, first
the integral of the non-integer order is calculated and then the derivative
of the integer order is calculated.

Since the Caputo Fractional derivative is more restritive than the Liouville
one, both derivatives are defined by means of the each other.%
\begin{equation}
^{C}D_{^{a^{+}}}^{\alpha }\left[ f\right] (x):I_{^{a^{+}}}^{n-\alpha }\left[
f^{\left( n\right) }\right] (x)  \label{d6}
\end{equation}

\begin{equation*}
^{C}D_{^{b-}}^{\alpha }\left[ f\right] (x):\left( -1\right)
^{n}I_{^{b^{-}}}^{n-\alpha }\left[ f^{\left( n\right) }\right] (x)
\end{equation*}

\qquad \qquad \qquad \qquad \qquad \qquad \qquad \qquad \qquad \qquad \qquad
\qquad \qquad \qquad \qquad \qquad \qquad \qquad \qquad \qquad \qquad \qquad
\qquad \qquad \qquad \qquad \qquad \qquad \qquad \qquad \qquad \qquad \qquad
\qquad \qquad \qquad \qquad \qquad \qquad \qquad \qquad \qquad \qquad \qquad

Specially, $\alpha =0\ $then the left and right Caputo derivatives are equal
to each other.

Today, the concept of FC dates back to Leibniz. Leibniz discussed the
concept of FC with her contemporaries in 1695. \ Euler noticed in 1738 what
a problem non-integer order derivatives (FC) pose. By 1822, Fourier gave the
first definition of non-positive integers by using integral notation to
define the derivative. Abel in 1826 and Liouville in 1832 gave versions of
the non-integer order derivative.

Let us present the necessary definitions and preliminary information that we
will use in this study.

\begin{definition}
$\left[ 9\right] \ $The function $f:[0,b]\rightarrow \mathbb{R}$ is said to
be m-convex, where $m\in \lbrack 0,1]$, if for all $x,y$ $\in $ $[0,b]$ and $%
t\in $ $[0,1]$, we have:

\begin{equation*}
\ \ \ \ \ \ \ \ f(tx+m(1-t)y)\leq tf(x)+m(1-t)f(y)\ \ \ \ \ \ \ \ \ \ \ \ \
\ \ \ \ \ \ \ \ \ \ \ \ \ \ \ \ \ \ 
\end{equation*}
\end{definition}

For many papers connected with $m$-convex and $(\alpha ,m)-$convex functions
see $([1-8])$ and the references therein.

We will need the modified forms of the $m$-convex function:

$m-$convexity of $f\ \ :$%
\begin{equation*}
\ f\left( tx+(1-t)y\right) =f(tx+m(1-t)\frac{y}{m})\leq tf(x)+m(1-t)f(\frac{y%
}{m})\ 
\end{equation*}

$m$-convexity of\ $\left\vert f^{\left( n+1\right) }\right\vert \ :$%
\begin{equation*}
\left\vert \ f^{\left( n+1\right) }\left( tx+(1-t)y\right) \right\vert
=\left\vert f^{\left( n+1\right) }(tx+m(1-t)\frac{y}{m})\right\vert \leq
t\left\vert f^{\left( n+1\right) }\left( x\right) \right\vert
+m(1-t)\left\vert f^{\left( n+1\right) }(\frac{y}{m})\right\vert
\end{equation*}

$m$-convexity of\ $\left\vert f^{\left( n+1\right) }\right\vert ^{q}\ .:$%
\begin{equation*}
\left\vert \ f^{\left( n+1\right) }\left( tx+(1-t)y\right) \right\vert
^{q}=\left\vert f^{\left( n+1\right) }(tx+m(1-t)\frac{y}{m})\right\vert
^{q}\leq t\left\vert f^{\left( n+1\right) }\left( x\right) \right\vert
^{q}+m(1-t)\left\vert f^{\left( n+1\right) }(\frac{y}{m})\right\vert ^{q}
\end{equation*}

\begin{definition}
$[11]\ $Let $\alpha \geq 0$ and $\alpha \notin $ $\{1,2,3,...\}$, $n=[\alpha
]+1$, $f\in AC^{n}[a,b]$, the space of functions having $n-$ th derivatives
absolutely continuous. The left-sided and right-sided Caputo fractional
derivatives of order $\alpha $ are defined as follows:
\end{definition}

\begin{equation}
(^{C}D_{^{a^{+}}}^{\alpha }f)(x)=\frac{1}{\Gamma \left( n-\alpha \right) }%
\dint_{a}^{x}\frac{f^{\left( n\right) }\left( t\right) }{\left( x-t\right)
^{\alpha -n+1}}dt,\;\;x>a\;\ \ \ \ \ \ \ \   \label{d3}
\end{equation}

and \ \ \ \ \ \ \ \ \ 
\begin{equation*}
(^{C}D_{^{b^{-}}}^{\alpha }f)(x)=\frac{\left( -1\right) ^{n}}{\Gamma \left(
n-\alpha \right) }\dint_{x}^{b}\frac{f^{\left( n\right) }\left( t\right) }{%
\left( t-x\right) ^{\alpha -n+1}}dt,\;\;x<b
\end{equation*}%
\ \ \ \ \ \ \ \ \ \ \ \ \ \ \ \ \ \ \ \ \ \ \ \ \ \ \ \ \ \ \ \ \ \ \ \ \ \
\ \ \ \ \ \ \ \ \ \ \ \ \ \ \ \ \ \ \ \ \ \ \ \ \ \ \ \ \ \ \ \ \ \ \ \ \ \
\ \ \ \ \ \ \ \ \ \ \ \ \ \ \ \ \ \ \ \ \ \ \ \ \ \ \ \ \ \ \ \ \ \ \ \ \ \
\ \ \ \ \ \ \ \ \ \ \ \ \ \ \ \ \ \ \ \ \ \ \ \ \ \ \ \ \ \ \ \ \ \ \ \ \ \
\ \ \ \ \ \ \ \ \ \ \ \ \ \ \ \ \ \ \ \ \ \ \ \ \ \ \ \ \ \ \ \ \ \ \ \ \ \
\ \ \ \ \ \ \ \ \ \ \ \ \ \ \ \ \ \ \ \ \ \ \ \ \ \ \ \ \ \ \ \ \ \ \ \ \ \
\ \ \ \ \ \ \ \ \ \ \ \ \ \ \ \ \ \ \ \ \ \ \ \ \ \ \ \ \ \ \ \ \ \ \ \ \ \
\ \ \ \ \ \ \ \ \ \ \ \ \ \ \ \ \ \ \ \ \ \ \ \ \ \ \ \ \ \ \ \ \ \ \ \ \ \
\ \ \ \ \ \ \ \ \ \ \ \ \ \ \ \ \ \ \ \ \ \ \ \ \ \ \ \ \ \ \ \ \ \ \ \ \ \
\ \ \ \ \ \ \ \ \ \ \ \ \ \ \ \ \ \ \ \ \ \ \ \ \ \ \ \ \ \ \ \ \ \ \ \ \ \
\ \ \ \ \ \ \ \ \ \ \ \ \ \ \ \ \ \ \ \ \ \ \ \ \ \ \ \ \ \ \ \ \ \ \ \ \ \
\ \ \ \ \ \ \ \ \ \ \ \ \ \ \ \ \ \ \ \ \ \ \ \ \ \ \ \ \ \ \ \ \ \ \ \ \ \
\ \ \ \ \ \ \ \ \ \ 

\ \ \ \ \ \ \ \ \ \ \ \ \ \ \ \ \ \qquad \qquad \qquad \qquad \qquad \qquad
\qquad \qquad \qquad \qquad \qquad \qquad \qquad \qquad \qquad \qquad \qquad
\qquad \qquad \qquad \qquad \qquad \qquad \qquad \qquad \qquad \qquad \qquad
\qquad \qquad \qquad \qquad \qquad \qquad \qquad \qquad \qquad \qquad

\bigskip

\bigskip If $n=1\ $and $\alpha =0\;,\ $we have $%
(^{C}D_{^{a^{+}}}^{0}f)(x)=(^{C}D_{^{b^{-}}}^{0}f)(x)=f\left( x\right) $%
\bigskip\ . For many papers connected fractional operators see $\left( \left[
12-25\right] \right) $

We will also use the well-known H\"{o}lder inequality in the literature :
let be $p>1\;$and $p^{-1}+q^{-1}=1,\ $If$\;f\ \;$and $g\ $reel functions on $%
\left[ a,b\right] \;$such\ that $\left\vert f\ \right\vert ^{p}\;and\ \
\left\vert f\ \right\vert ^{q}\;$are integrable \ on$\ \left[ a,b\right] .$

Then 
\begin{equation*}
\dint_{a}^{b}\left\vert f\left( x\right) g\left( x\right) \right\vert dx\leq
\left( \int\nolimits_{a}^{b}\left\vert f\left( x\right) \right\vert
^{p}dx\right) ^{\frac{1}{p}}\left( \int\nolimits_{a}^{b}\left\vert g\left(
x\right) \right\vert ^{q}dx\right) ^{\frac{1}{q}}.
\end{equation*}

\bigskip\ \ \ \ In $\left[ 10\right] ,$Farid et al. established the
following identity for Caputo fractional operators. \ 

\begin{lemma}
\ In $\left[ 10\right] \ $Let $f:[a,b]\rightarrow \mathbb{R},$ be a
differentiable mapping on (a, b) with $a<b$. If $\ f^{(n+1)}\in L[a,b]$,
then the following equality for fractional integrals holds:%
\begin{eqnarray*}
&&\frac{f^{\left( n\right) }\left( a\right) +f^{\left( n\right) }\left(
b\right) }{2}-\frac{\Gamma \left( n-\alpha +1\right) }{2\left( b-a\right)
^{n-\alpha }}\left[ (^{C}D_{^{a^{+}}}^{\alpha }f)(b)+\left( -1\right)
^{n}(^{C}D_{^{b^{-}}}^{\alpha }f)(a)\right] \ \ \ \ \ \ \ \ \ \ \ \ \ \ \ \
\ \ \ \ \ \ \ \ \ \ \ \ \ \ \ \ \ \ \ \ \ \ \ \ \ \ \ \ \ \ \ \ \ \ \ \ \ \
\ \ \ \ \ \ \ \ \ \ \ \ \ \ \ \ \ \ \ \ \ \ \ \ \ \ \ \ \ \  \\
&=&\frac{b-a}{2}\dint_{0}^{1}\left[ \left( 1-t\right) ^{n-\alpha
}-t^{n-\alpha }\right] f^{\left( n+1\right) }\left( tx+\left( 1-t\right)
y\right) dt\ \ \ \ \ \ \ \ \ \ \ \ \ \ \ \ \ \ \ \ \ \ \ \ \ \ \ \ \ \ \ \ \
\ \ \ \ \ \ \ \ \ \ \ \ \ \ \ \ \ \ \ \ \ \ \ \ \ \ \ \ \ \ \ \ \ \ \ \ \ \
\ \ \ \ \ \ \ \ \ \ \ \ \ \ \ \ \ \ \ \ \ \ \ \ \ \ 
\end{eqnarray*}
\end{lemma}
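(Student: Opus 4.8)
The plan is to establish the identity by starting from its right-hand side and collapsing the single integral into the two Caputo derivatives through integration by parts. Because the left-hand side is built from $f^{(n)}(a)$, $f^{(n)}(b)$ and operators anchored at $a$ and $b$, I read the integrand's argument as $ta+(1-t)b$, so the parameter sweeps from $b$ at $t=0$ to $a$ at $t=1$. The first step is to split along the two kernels, $\int_0^1 [(1-t)^{n-\alpha}-t^{n-\alpha}] f^{(n+1)}(ta+(1-t)b)\,dt = I_1 - I_2$, with $I_1$ attached to $(1-t)^{n-\alpha}$ and $I_2$ to $t^{n-\alpha}$. The mechanism driving everything is the chain rule $\frac{d}{dt} f^{(n)}(ta+(1-t)b) = -(b-a) f^{(n+1)}(ta+(1-t)b)$, which exhibits each $f^{(n+1)}$ factor as a $t$-derivative and so invites integration by parts with the polynomial kernel differentiated.

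Integrating $I_1$ by parts, the boundary contribution survives only at $t=0$ since $(1-t)^{n-\alpha}$ vanishes at $t=1$ --- this is where the hypothesis $n-\alpha>0$ (guaranteed by $n=[\alpha]+1$) is used --- yielding $f^{(n)}(b)/(b-a)$ plus a residual integral with kernel $(1-t)^{n-\alpha-1}$. The symmetric treatment of $I_2$ gives the boundary value $f^{(n)}(a)/(b-a)$ and a residual integral with kernel $t^{n-\alpha-1}$. To identify these residuals as fractional derivatives I apply the substitution $u=ta+(1-t)b$, for which $1-t=(u-a)/(b-a)$, $t=(b-u)/(b-a)$, and $dt=-du/(b-a)$. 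The first residual becomes $\frac{1}{(b-a)^{n-\alpha}}\int_a^b (u-a)^{n-\alpha-1} f^{(n)}(u)\,du$, which the right-sided definition identifies as $\frac{(-1)^n \Gamma(n-\alpha)}{(b-a)^{n-\alpha}} ({}^{C}D_{b^-}^{\alpha}f)(a)$, while the second becomes $\frac{\Gamma(n-\alpha)}{(b-a)^{n-\alpha}} ({}^{C}D_{a^+}^{\alpha}f)(b)$ from the left-sided definition. The relation $(n-\alpha)\Gamma(n-\alpha)=\Gamma(n-\alpha+1)$ then tidies the constants.

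Combining, $I_1 - I_2 = \frac{f^{(n)}(a)+f^{(n)}(b)}{b-a} - \frac{\Gamma(n-\alpha+1)}{(b-a)^{n-\alpha+1}} [({}^{C}D_{a^+}^{\alpha}f)(b) + (-1)^n ({}^{C}D_{b^-}^{\alpha}f)(a)]$, and multiplying by $(b-a)/2$ reproduces the asserted left-hand side. I expect the main obstacle to be bookkeeping rather than any genuine difficulty: one must pair $(u-a)^{n-\alpha-1}$ with the right-sided operator and $(b-u)^{n-\alpha-1}$ with the left-sided one without interchanging them, and reconcile the sign from $dt=-du/(b-a)$ with the $(-1)^n$ already present in the right-sided definition, so that both operators enter with precisely the coefficients displayed. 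It is also worth recording explicitly that $n-\alpha>0$ is exactly what annihilates the endpoint terms where $(1-t)^{n-\alpha}$ and $t^{n-\alpha}$ vanish; without it the boundary terms would not cancel as required.
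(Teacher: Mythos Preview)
Your argument is correct. The paper does not supply its own proof of this lemma; it is quoted from reference~[10] and used as a tool, so there is nothing in the present paper to compare against. Your approach---split the kernel, integrate each piece by parts using $\frac{d}{dt}f^{(n)}(ta+(1-t)b)=-(b-a)f^{(n+1)}(ta+(1-t)b)$, then substitute $u=ta+(1-t)b$ to recognise the residual integrals as the two Caputo derivatives---is precisely the standard derivation of identities of this type and is almost certainly what~[10] does. The bookkeeping you flag (pairing $(u-a)^{n-\alpha-1}$ with the right-sided operator, $(b-u)^{n-\alpha-1}$ with the left-sided one, and using $n-\alpha>0$ to kill the unwanted boundary terms) is handled correctly, and the final combination $\frac{b-a}{2}(I_1-I_2)$ indeed reproduces the stated left-hand side. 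You are also right to read the integrand's argument as $ta+(1-t)b$; the $x,y$ in the displayed statement is a typographical slip, as confirmed by the subsequent theorems in the paper which all apply the lemma with $a,b$.
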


\begin{lemma}
$\ $In $\left[ 20\right] \ $Let $f:I\subset \mathbb{R}\rightarrow \mathbb{R}$
be a differentiable mapping on I ,where $a,b\in I\ $ with $t\in \left[ 0,1%
\right] .\ $If $\ f^{(n+1)}\in L[a,b]$, Then for all $a\leq x<y\leq b\;$and $%
\alpha >0\ $we have 
\begin{equation*}
\frac{1}{y-x}f^{\left( n\right) }\left( y\right) -\frac{\left( -1\right)
^{n}\Gamma \left( n-\alpha +1\right) }{\left( y-x\right) ^{n-\alpha +1}}%
\left( C_{D_{y^{-}}^{\alpha }}f\right) \left( x\right) =\dint_{0}^{1}\left(
1-t\right) ^{n-\alpha }f^{\left( n+1\right) }\left( tx+\left( 1-t\right)
y\right) dt.
\end{equation*}
\end{lemma}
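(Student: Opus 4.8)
The plan is to start from the right-hand side and reduce it to the left-hand side by a single integration by parts followed by a change of variables. Write $I=\int_{0}^{1}(1-t)^{n-\alpha}f^{(n+1)}(tx+(1-t)y)\,dt$. The crucial observation is that, by the chain rule, $\frac{d}{dt}f^{(n)}(tx+(1-t)y)=(x-y)f^{(n+1)}(tx+(1-t)y)$, so that an antiderivative of the factor $f^{(n+1)}(tx+(1-t)y)$ is $\frac{1}{x-y}f^{(n)}(tx+(1-t)y)$.

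First I would integrate by parts in $I$ taking $u=(1-t)^{n-\alpha}$ and $dv=f^{(n+1)}(tx+(1-t)y)\,dt$, so that $v=\frac{1}{x-y}f^{(n)}(tx+(1-t)y)$ and $du=-(n-\alpha)(1-t)^{n-\alpha-1}\,dt$. Since $n=[\alpha]+1>\alpha$ we have $n-\alpha>0$, so the boundary factor $(1-t)^{n-\alpha}$ vanishes at $t=1$, while at $t=0$ it equals $1$ and $tx+(1-t)y=y$. Hence the boundary term collapses to $-\frac{1}{x-y}f^{(n)}(y)=\frac{1}{y-x}f^{(n)}(y)$, which is exactly the first term of the left-hand side. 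The integration by parts leaves the residual integral $\frac{n-\alpha}{x-y}\int_{0}^{1}(1-t)^{n-\alpha-1}f^{(n)}(tx+(1-t)y)\,dt$.

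Next I would evaluate this residual integral by the substitution $s=tx+(1-t)y=y+t(x-y)$, so that $ds=(x-y)\,dt$, the endpoints $t=0,1$ map to $s=y,x$, and $1-t=\frac{s-x}{y-x}$, which is positive since $x<s<y$. After reversing the orientation of the limits, the residual integral becomes $\frac{1}{(y-x)^{n-\alpha}}\int_{x}^{y}(s-x)^{n-\alpha-1}f^{(n)}(s)\,ds$. Comparing with the definition of the right-sided Caputo derivative $(^{C}D_{y^{-}}^{\alpha}f)(x)=\frac{(-1)^{n}}{\Gamma(n-\alpha)}\int_{x}^{y}f^{(n)}(s)(s-x)^{n-\alpha-1}\,ds$, this last integral equals $(-1)^{n}\Gamma(n-\alpha)\,(^{C}D_{y^{-}}^{\alpha}f)(x)$.

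Finally I would assemble the two pieces and simplify the constants using $\Gamma(n-\alpha+1)=(n-\alpha)\Gamma(n-\alpha)$, which converts the factor $(n-\alpha)\Gamma(n-\alpha)$ into $\Gamma(n-\alpha+1)$ and produces precisely $-\frac{(-1)^{n}\Gamma(n-\alpha+1)}{(y-x)^{n-\alpha+1}}(^{C}D_{y^{-}}^{\alpha}f)(x)$, the second term of the left-hand side, completing the identity. The only delicate point is sign bookkeeping: because $x<y$ one has $x-y<0$, so the factors $\frac{1}{x-y}$, the reversal of the integration limits induced by the substitution, and the $(-1)^{n}$ in the Caputo definition must all be tracked consistently; once these are handled carefully the identity follows with no further difficulty.
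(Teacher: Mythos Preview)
Your argument is correct: integration by parts in $t$ produces the boundary term $\tfrac{1}{y-x}f^{(n)}(y)$ (since $n-\alpha>0$ kills the contribution at $t=1$), and the substitution $s=tx+(1-t)y$ turns the remaining integral into $\int_{x}^{y}(s-x)^{n-\alpha-1}f^{(n)}(s)\,ds$, which is exactly $(-1)^{n}\Gamma(n-\alpha)\,(^{C}D_{y^{-}}^{\alpha}f)(x)$; the functional equation $\Gamma(n-\alpha+1)=(n-\alpha)\Gamma(n-\alpha)$ then gives the stated constant. One small wording slip: when you write that ``the residual integral becomes $\frac{1}{(y-x)^{n-\alpha}}\int_{x}^{y}(s-x)^{n-\alpha-1}f^{(n)}(s)\,ds$'' you have momentarily dropped the prefactor $\frac{n-\alpha}{x-y}$ from the previous line; since you reintroduce the factor $n-\alpha$ in the final assembly via the Gamma identity, the computation is fine, but in a clean write-up keep that prefactor visible throughout so the sign and the extra power of $y-x$ are transparent.

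As for comparison with the paper: the present paper does \emph{not} prove this lemma at all---it is simply quoted from reference~[20] and then used as a tool in Theorems~5 and~6. Your integration-by-parts-plus-substitution argument is the standard route for such Caputo identities (and is essentially how Lemma~1 from~[10] is proved as well), so there is nothing to contrast; you have supplied exactly the proof one would expect the cited source to contain.
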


This work is a continuation of my work in $\left[ 20\right] .\;$\"{O}zdemir
et al. constructed an identity for left sided Caputo derivatives in Lemma 2.
In this study, we constructed differently a few inequalities for both right
and left sided Caputo derivatives. The aim of this paper is to establish new
upper bounds. To do this, we used some classical inequalities.

\section{\protect\bigskip The Results}

\begin{theorem}
Let $f:I\subset \mathbb{R}\rightarrow \mathbb{R},I\subset \lbrack 0,\infty )$
,be a differentiable function on I such that $f\in AC^{n}L[a,b]$ where $%
a,b\in I$ with $0<a<t<x\leq b$. \ If $\alpha >0$ and $\alpha \notin $ $%
\{1,2,3,...\}$, $n=[\alpha ]+1,\;f^{\left( n\right) }>0.$Then%
\begin{equation}
\dint_{a}^{b}f^{\left( n\right) }\left( t\right) dt\leq  \label{d5}
\end{equation}%
\begin{equation*}
\frac{\Gamma \left( n-\alpha \right) \left[ (^{C}D_{^{a^{+}}}^{\alpha
}f)(x)+\left( -1\right) ^{n}(^{C}D_{^{b^{-}}}^{\alpha }f)(x)\right] +\Gamma
\left( \alpha -n+2\right) \left[ (^{C}D_{^{a^{+}}}^{\alpha }f)(x)+\left(
-1\right) ^{\alpha }(^{C}D_{^{b^{-}}}^{\alpha }f)(x)\right] }{2}\ 
\end{equation*}
\end{theorem}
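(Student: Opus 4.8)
The plan is to reduce everything to the integral representation of the Caputo derivatives in (\ref{d3}) and then exploit the standing hypothesis $f^{(n)}>0$ together with an elementary pointwise bound on the two fractional kernels. First I would split the left-hand side about the interior point $x$ as
\[
\int_{a}^{b}f^{(n)}(t)\,dt=\int_{a}^{x}f^{(n)}(t)\,dt+\int_{x}^{b}f^{(n)}(t)\,dt,
\]
which is the natural decomposition matching the two one-sided operators. Writing $\beta=n-\alpha$, the condition $n=[\alpha]+1$ with $\alpha\notin\{1,2,3,\dots\}$ forces $0<\beta<1$, so the kernel exponent $n-\alpha-1=\beta-1$ lies in $(-1,0)$; this sign information is what drives the whole estimate.

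Next I would clear the Gamma factors from the operators. The definitions in (\ref{d3}) give $\Gamma(n-\alpha)\,(^{C}D_{a^{+}}^{\alpha}f)(x)=\int_{a}^{x}(x-t)^{\beta-1}f^{(n)}(t)\,dt$ and, using $(-1)^{2n}=1$, $\Gamma(n-\alpha)\,(-1)^{n}(^{C}D_{b^{-}}^{\alpha}f)(x)=\int_{x}^{b}(t-x)^{\beta-1}f^{(n)}(t)\,dt$. Hence the first bracket of (\ref{d5}) is exactly the sum of these two kernel-weighted integrals of $f^{(n)}$. Because $\beta-1<0$, the factors $(x-t)^{\beta-1}$ and $(t-x)^{\beta-1}$ dominate $1$ on the respective subintervals, so from $f^{(n)}>0$ one obtains $\int_{a}^{x}f^{(n)}\le\int_{a}^{x}(x-t)^{\beta-1}f^{(n)}$ and likewise on $[x,b]$. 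Adding these yields the first upper bound
\[
\int_{a}^{b}f^{(n)}(t)\,dt\le\Gamma(n-\alpha)\bigl[(^{C}D_{a^{+}}^{\alpha}f)(x)+(-1)^{n}(^{C}D_{b^{-}}^{\alpha}f)(x)\bigr].
\]

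I would then rerun the same mechanism with the companion normalization. Using the recurrence $\Gamma(\alpha-n+2)=(\alpha-n+1)\,\Gamma(\alpha-n+1)$ to rewrite the second Gamma factor and a parallel elementary comparison of the kernel against the constant $\Gamma(\alpha-n+2)/\Gamma(n-\alpha)$, I would obtain a second upper bound for $\int_{a}^{b}f^{(n)}$ carrying the coefficient $\Gamma(\alpha-n+2)$. Since the arithmetic mean of two upper bounds is again an upper bound, averaging the two estimates produces precisely the right-hand side of (\ref{d5}).

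I expect the main obstacle to be the kernel-comparison step. The pointwise bound $(x-t)^{\beta-1}\ge 1$ holds automatically only when $x-t\le 1$, so to make the estimate rigorous over an interval of arbitrary length one must either work under a normalization such as $b-a\le 1$ or replace the crude bound by a sharper elementary inequality relating $s^{\beta-1}$ to the constants $1$ and $\Gamma(\alpha-n+2)/\Gamma(n-\alpha)$; pinning down this inequality for all $\beta\in(0,1)$ is the technical heart of the argument. A secondary delicate point is the factor $(-1)^{\alpha}$ in the second bracket: for non-integer $\alpha$ this must be read consistently with the first bracket (i.e. as $(-1)^{n}$) so that the right-hand side stays real, and I would fix this interpretation before performing the final averaging.
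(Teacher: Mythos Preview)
Your proposal has a genuine gap, and you already put your finger on it: the pointwise comparison $(x-t)^{\beta-1}\ge 1$ is only valid when $0<x-t\le 1$, so your first upper bound breaks down on intervals of length larger than $1$. Your suggested repair---replacing the crude bound by some ``sharper elementary inequality relating $s^{\beta-1}$ to the constants $1$ and $\Gamma(\alpha-n+2)/\Gamma(n-\alpha)$''---is left unspecified, and the second bound with coefficient $\Gamma(\alpha-n+2)$ is never actually derived; the phrase ``a parallel elementary comparison'' does not describe a concrete estimate. As written, neither of the two inequalities you plan to average has been established in general.

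The missing idea is not to bound each kernel separately by a constant, but to bound their \emph{sum} from below. For any $s>0$ one has the elementary inequality $s+s^{-1}\ge 2$; applied with $s=(x-t)^{n-\alpha-1}$ this gives
\[
(x-t)^{n-\alpha-1}+(x-t)^{\alpha-n+1}\ge 2,
\]
valid for every $t\ne x$ with no restriction on the size of $|x-t|$. Multiplying by $f^{(n)}(t)>0$ and integrating over $[a,b]$ immediately yields
\[
2\int_{a}^{b}f^{(n)}(t)\,dt\le \int_{a}^{b}(x-t)^{n-\alpha-1}f^{(n)}(t)\,dt+\int_{a}^{b}(x-t)^{\alpha-n+1}f^{(n)}(t)\,dt,
\]
and now splitting each integral at $x$ and identifying the pieces with the Caputo kernels produces both brackets of the right-hand side of (\ref{d5}) in one stroke, already divided by $2$. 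There is no need for two separate estimates followed by averaging; the single inequality $s+s^{-1}\ge 2$ does all the work and is precisely the ``sharper elementary inequality'' you were looking for.
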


\begin{proof}
\bigskip First of all, since $\left( x-t\right) >0$ we can write the
following inequality

\begin{equation*}
\left( x-t\right) ^{n-\alpha -1}+\frac{1}{\left( x-t\right) ^{n-\alpha -1}}%
=\left( x-t\right) ^{n-\alpha -1}+\left( x-t\right) ^{\alpha -n+1}>2
\end{equation*}

Now \ If we multiply each side of the final inequality by $f^{\left(
n\right) }>0\;$and then integrate it over $\left[ a,b\right] $ we have
\end{proof}

\bigskip 
\begin{eqnarray*}
2\dint_{a}^{b}f^{\left( n\right) }\left( t\right) dt &<&\dint_{a}^{b}\left(
x-t\right) ^{n-\alpha -1}\;f^{\left( n\right) }\left( t\right)
dt+\dint_{a}^{b}\left( x-t\right) ^{\alpha -n+1}f^{\left( n\right) }\left(
t\right) dt \\
&=&\dint_{a}^{x}\left( x-t\right) ^{n-\alpha -1}\;f^{\left( n\right) }\left(
t\right) dt+\dint_{x}^{b}\left( x-t\right) ^{\alpha -n-1}\;f^{\left(
n\right) }\left( t\right) dt \\
&&+\dint_{a}^{x}\left( x-t\right) ^{\alpha -n+1}\;f^{\left( n\right) }\left(
t\right) dt+\dint_{x}^{b}\left( x-t\right) ^{\alpha -n+1}\;f^{\left(
n\right) }\left( t\right) dt \\
&=&\Gamma \left( n-\alpha \right) (^{C}D_{^{a^{+}}}^{\alpha }f)(x)+\left(
-1\right) ^{n}\Gamma \left( n-\alpha \right) (^{C}D_{^{b^{-}}}^{\alpha }f)(x)
\\
&&+\Gamma \left( \alpha -n+2\right) (^{C}D_{^{a^{+}}}^{\alpha }f)(x)+\left(
-1\right) ^{\alpha }\Gamma \left( \alpha -n+2\right)
(^{C}D_{^{b^{-}}}^{\alpha }f)(x)
\end{eqnarray*}

Taking into account definition $\left( \ref{d3}\right) $ we obtain
inequality (\ref{d5})

\begin{theorem}
Let $\alpha >0,$and $\alpha \notin $ $\{1,2,3,...\}$, $n=[\alpha
]+1,\;f^{\left( n\right) }>0.$\ If \ $f:I\subset \mathbb{R}\rightarrow 
\mathbb{R},\ I\subset \lbrack 0,\infty )$ ,be a differentiable function on I
such that $f\in AC^{n}L[a,b]$ $.$
\end{theorem}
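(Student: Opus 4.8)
The plan is to follow the same elementary strategy that produced Theorem 1 and inequality (\ref{d5}), now starting from a different pointwise inequality between powers of $(x-t)$. First I would fix $x$ with $a<x\leq b$ and work on $[a,b]$, where the two relevant exponents $n-\alpha-1$ and $\alpha-n+1$ are negatives of one another, so that $(x-t)^{n-\alpha-1}=1/(x-t)^{\alpha-n+1}$ whenever $x-t>0$. The elementary inequality to be exploited should be a companion of the bound $u+u^{-1}>2$ used for Theorem 1 (for instance an AM--GM estimate or a weighted Young-type inequality $u^{1/p}v^{1/q}\leq u/p+v/q$), chosen precisely so that after integration the right-hand side reassembles into Caputo fractional derivatives carrying the Gamma factors $\Gamma(n-\alpha)$ and $\Gamma(\alpha-n+2)$ that already appear in (\ref{d5}).

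Second, I would multiply the chosen pointwise inequality by $f^{(n)}(t)$. Because the hypothesis gives $f^{(n)}>0$, the direction of the inequality is preserved, so integrating over $[a,b]$ is legitimate. I would then split $\int_a^b$ as $\int_a^x+\int_x^b$, exactly as in the displayed computation following Theorem 1. The care needed here is that on $[x,b]$ one has $x-t<0$, so that $(x-t)^{\alpha-n+1}$ must be rewritten through $(t-x)$; this is precisely the step that forces the appearance of the sign factors $(-1)^n$ and $(-1)^\alpha$ and converts the left-endpoint integrals into right-sided Caputo derivatives $(^{C}D_{^{b^{-}}}^{\alpha}f)(x)$.

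Third, each of the resulting integrals is matched to the definition (\ref{d3}): an integrand of the form $(x-t)^{\alpha-n+1}f^{(n)}(t)$ on $[a,x]$ produces $\Gamma(\alpha-n+2)\,(^{C}D_{^{a^{+}}}^{\alpha}f)(x)$ after inserting the reciprocal Gamma constant, and likewise for the remaining three pieces. Collecting the four terms and dividing by the combinatorial constant coming from the elementary inequality then yields the asserted bound, in the same form as (\ref{d5}).

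The main obstacle I anticipate is integrability at the single point $t=x$, where the negative power $(x-t)^{\alpha-n+1}$ (or its reciprocal) is singular: one must verify that $\alpha-n+1>-1$, i.e. that the choice $n=[\alpha]+1$ keeps the exponent admissible, so that the split integrals converge and the identification with (\ref{d3}) is valid. A secondary subtlety is the consistent interpretation of fractional powers of the negative quantity $x-t$ on $[x,b]$, which is what the factor $(-1)^\alpha$ in the conclusion is meant to encode; making that sign bookkeeping precise is the delicate part, while the rest reduces to the routine Gamma-function substitutions already carried out for Theorem 1.
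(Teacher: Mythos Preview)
Your plan misreads the target. The statement you are asked to prove is not a second variant of inequality~(\ref{d5}); its conclusion is the displayed bound~(\ref{d0}),
\[
\int_a^b \sqrt{\lvert x-t\rvert^{2(n-\alpha)}}\,dt \;\leq\; \Gamma(n-\alpha+1)\,\frac{(^{C}D_{a^+}^{\alpha}f)(x)+(-1)^n(^{C}D_{b^-}^{\alpha}f)(x)}{2},
\]
which carries a single Gamma factor $\Gamma(n-\alpha+1)$ and a left-hand side that does not involve $f$ at all. Your proposed reciprocal-pair inequality in the exponents $n-\alpha-1$ and $\alpha-n+1$, together with the expectation that both $\Gamma(n-\alpha)$ and $\Gamma(\alpha-n+2)$ will reappear, simply rebuilds the machinery of Theorem~1 and cannot produce~(\ref{d0}).

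The paper's argument differs precisely in its opening pointwise step: it applies the AM--GM inequality not to a number and its reciprocal but to the pair $\lvert x-t\rvert^{n-\alpha}$ and $\lvert t-x\rvert^{n-\alpha}$, writing
\[
\sqrt{\lvert x-t\rvert^{2(n-\alpha)}}=\sqrt{\lvert x-t\rvert^{n-\alpha}\,\lvert t-x\rvert^{n-\alpha}}\leq \tfrac{1}{2}\bigl(\lvert x-t\rvert^{n-\alpha}+\lvert t-x\rvert^{n-\alpha}\bigr),
\]
then inserts the factor $f^{(n)}(t)>0$ on the right, integrates over $[a,b]$, splits at $x$, and identifies the four pieces with the Caputo definitions so that the constant $\Gamma(n-\alpha+1)$ emerges. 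Thus the later stages of your outline (multiply by $f^{(n)}$, split the range of integration, match to~(\ref{d3})) are correct in spirit, but the crucial exponent is $n-\alpha$, not $n-\alpha-1$, and only one Gamma constant appears. Your worry about integrability at $t=x$ also disappears in the actual proof, since $n-\alpha>0$ makes $\lvert x-t\rvert^{n-\alpha}$ bounded on $[a,b]$; that singularity is an artefact of the wrong exponent you chose.
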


where $a,b\in I$ with $0<t\leq a\leq x\leq b$. Then the following inequality
holds :

\begin{equation}
\dint_{a}^{b}\sqrt{\left\vert \left( x-t\right) \right\vert ^{2\left(
n-\alpha \right) }dt}\leq \Gamma \left( n-\alpha +1\right) \frac{\left[
(^{C}D_{^{a^{+}}}^{\alpha }f)(x)+\left( -1\right)
^{n}(^{C}D_{^{b^{-}}}^{\alpha }f)(x)\right] }{2}  \label{d0}
\end{equation}

\begin{proof}
According to relation between the Geometric and Arithmetic means we can
write the basic inequality:%
\begin{eqnarray*}
\sqrt{\left\vert \left( x-t\right) \right\vert ^{2\left( n-\alpha \right) }}
&=&\sqrt{\left\vert \left( x-t\right) \right\vert ^{\left( n-\alpha \right)
}\left\vert \left( t-x\right) \right\vert ^{\left( n-\alpha \right) }} \\
&\leq &\frac{1}{2}\left[ \left\vert \left( x-t\right) \right\vert ^{\left(
n-\alpha \right) }+\left\vert \left( t-x\right) \right\vert ^{\left(
n-\alpha \right) }\right] \\
&\leq &\frac{1}{2}\left[ \left\vert \left( x-t\right) \right\vert ^{\left(
n-\alpha \right) }+\left\vert \left( t-x\right) \right\vert ^{\left(
n-\alpha \right) }\right] f^{\left( n\right) }\left( t\right) \\
&=&\frac{1}{2}\left[ \left\vert \left( x-t\right) \right\vert ^{\left(
n-\alpha \right) }f^{\left( n\right) }\left( t\right) +\left\vert \left(
t-x\right) \right\vert ^{\left( n-\alpha \right) }f^{\left( n\right) }\left(
t\right) \right]
\end{eqnarray*}%
Now, If we integrate both sides of the first and last terms over $\left[ a,b%
\right] $ we obtain
\end{proof}

\begin{eqnarray*}
\dint_{a}^{b}\sqrt{\left\vert \left( x-t\right) \right\vert ^{2\left(
n-\alpha \right) }}dt &\leq &\frac{1}{2}\left[ \dint_{a}^{x}\left\vert
\left( x-t\right) \right\vert ^{\left( n-\alpha \right) }f^{\left( n\right)
}\left( t\right) dt+\dint_{x}^{b}\left\vert \left( t-x\right) \right\vert
^{\left( n-\alpha \right) }f^{\left( n\right) }\left( t\right) dt\right] \\
&=&\frac{1}{2}\left[ \dint_{a}^{x}\left\vert \left( x-t\right) \right\vert
^{\left( n-\alpha \right) }f^{\left( n\right) }\left( t\right)
dt+\dint_{x}^{b}\left\vert \left( x-t\right) \right\vert ^{\left( n-\alpha
\right) }f^{\left( n\right) }\left( t\right) dt\right] \\
&&+\frac{1}{2}\left[ \dint_{a}^{x}\left\vert \left( t-x\right) \right\vert
^{\left( n-\alpha \right) }f^{\left( n\right) }\left( t\right)
dt+\dint_{x}^{b}\left\vert \left( t-x\right) \right\vert ^{\left( n-\alpha
\right) }f^{\left( n\right) }\left( t\right) dt\right] \\
&=&\frac{1}{2}\left[ \dint_{a}^{x}\left( x-t\right) ^{n-\alpha }f^{\left(
n\right) }\left( t\right) dt-\dint_{b}^{x}\left( x-t\right) ^{n-\alpha
}f^{\left( n\right) }\left( t\right) dt\right] \\
&&+\frac{1}{2}\left[ \left( t-x\right) ^{n-\alpha }f^{\left( n\right)
}\left( t\right) dt-\dint_{b}^{x}\left\vert \left( t-x\right) \right\vert
^{\left( n-\alpha \right) }f^{\left( n\right) }\left( t\right) dt\right] \\
&=&\Gamma \left( n-\alpha +1\right) \frac{\left[ (^{C}D_{^{a^{+}}}^{\alpha
}f)(x)+\left( -1\right) ^{n}(^{C}D_{^{b^{-}}}^{\alpha }f)(x)\right] }{2}
\end{eqnarray*}

This completes the proof of inequality\ $\left( \ref{d0}\right) $

\begin{theorem}
Let $f:I\subset \mathbb{R}\rightarrow \mathbb{R},I\subset \lbrack 0,\infty )$%
, be a differentiable function on $I$ such that $f^{(n+1)}\in AC^{n}L[a,b].\ 
$If and $\ \left\vert f^{\left( n+1\right) }\right\vert \ $is $m-$convex on $%
\left[ x,y\right] $ \ for $t\in \lbrack 0,1]$, then for all $\alpha >0,$and $%
\alpha \notin $ $\{1,2,3,...\}$, $n=[\alpha ]+1,\ m\in \left( 0,1\right] \;$%
we have 
\begin{eqnarray}
&&\left\vert \frac{f^{\left( n\right) }\left( a\right) +f^{n}\left( b\right) 
}{2}-\frac{\Gamma \left( \alpha -n+1\right) }{2\left( b-a\right) ^{n-\alpha }%
}\left[ (^{C}D_{^{a^{+}}}^{\alpha }f)(b)+\left( -1\right)
^{n}(^{C}D_{^{b^{-}}}^{\alpha }f)(a)\right] \right\vert  \label{d4} \\
&\leq &\frac{b-a}{2}\left( \frac{1}{2\left( n-\alpha +1\right) }+\frac{1}{%
2\left( n-\alpha +2\right) }\right) \left( \left\vert f^{\left( n+1\right)
}\left( a\right) \right\vert +m\left\vert f^{\left( n+1\right) }\left( \frac{%
b}{m}\right) \right\vert \right)  \notag
\end{eqnarray}
\end{theorem}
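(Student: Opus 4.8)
The plan is to reduce everything to the integral identity already recorded in Lemma 1, specialized to the endpoints $x=a$ and $y=b$, which represents the quantity inside the absolute value on the left of \eqref{d4} as
\[
\frac{b-a}{2}\int_0^1\left[(1-t)^{n-\alpha}-t^{n-\alpha}\right]f^{(n+1)}\left(ta+(1-t)b\right)dt .
\]
(I note in passing that the Gamma factor written as $\Gamma(\alpha-n+1)$ in the statement should read $\Gamma(n-\alpha+1)$ to agree with Lemma 1; I take the identity exactly as in Lemma 1.) Taking moduli and pulling the absolute value inside the integral by the triangle inequality for integrals, the problem becomes that of bounding
\[
\frac{b-a}{2}\int_0^1\left|(1-t)^{n-\alpha}-t^{n-\alpha}\right|\left|f^{(n+1)}\left(ta+(1-t)b\right)\right|dt .
\]

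Next I would apply the $m$-convexity of $\left|f^{(n+1)}\right|$ recorded in the preliminaries, in the form
\[
\left|f^{(n+1)}\left(ta+(1-t)b\right)\right|\le t\left|f^{(n+1)}(a)\right|+m(1-t)\left|f^{(n+1)}\left(\tfrac{b}{m}\right)\right| .
\]
This splits the estimate into the two factors $\left|f^{(n+1)}(a)\right|$ and $m\left|f^{(n+1)}(b/m)\right|$, weighted respectively by the kernel integrals $J_1=\int_0^1\left|(1-t)^{n-\alpha}-t^{n-\alpha}\right| t\,dt$ and $J_2=\int_0^1\left|(1-t)^{n-\alpha}-t^{n-\alpha}\right|(1-t)\,dt$. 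Since $n=[\alpha]+1$ and $\alpha\notin\{1,2,\dots\}$, one has $n-\alpha\in(0,1)$, so the bracket genuinely changes sign at $t=\tfrac12$, and handling this absolute value is the \emph{main obstacle} of the argument.

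The key device I would use is the symmetry of the kernel about $t=\tfrac12$: the substitution $t\mapsto 1-t$ interchanges the weights $t$ and $1-t$ while fixing $\left|(1-t)^{n-\alpha}-t^{n-\alpha}\right|$, so $J_1=J_2=\tfrac12\int_0^1\left|(1-t)^{n-\alpha}-t^{n-\alpha}\right|dt$. Splitting that last integral at $t=\tfrac12$ and using $\int_0^{1/2}(1-t)^{n-\alpha}dt$ and $\int_0^{1/2}t^{n-\alpha}dt$ gives the sharp value $J_1=J_2=\dfrac{1-2^{-(n-\alpha)}}{\,n-\alpha+1\,}$, so the whole estimate reduces to $\tfrac{b-a}{2}\cdot\tfrac{1-2^{-(n-\alpha)}}{n-\alpha+1}\bigl(\left|f^{(n+1)}(a)\right|+m\left|f^{(n+1)}(b/m)\right|\bigr)$. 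Finally, since for $n-\alpha\in(0,1)$ one checks the elementary inequality $\dfrac{1-2^{-(n-\alpha)}}{n-\alpha+1}\le \dfrac{1}{2(n-\alpha+1)}+\dfrac{1}{2(n-\alpha+2)}$, the claimed bound \eqref{d4} follows. The only remaining bookkeeping is to reinsert the two Caputo operators through definition \eqref{d3}, keeping careful track of the sign $(-1)^n$ on the right-sided operator and of the normalizing constant $\Gamma(n-\alpha+1)$; that part is routine.
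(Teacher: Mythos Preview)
Your argument is correct, but it proceeds along a different line from the paper.  The paper's key device is the pointwise estimate
\[
\left|(1-t)^{n-\alpha}-t^{\,n-\alpha}\right|\le |1-2t|^{\,n-\alpha},
\]
valid because $n-\alpha\in(0,1)$.  After invoking this, the paper applies the $m$-convexity of $|f^{(n+1)}|$ and reduces the problem to integrating $t\,|1-2t|^{n-\alpha}$ and $(1-t)\,|1-2t|^{n-\alpha}$ over $[0,1]$, splitting at $t=\tfrac12$; those integrals produce the constants $\tfrac{1}{2(n-\alpha+1)}$ and $\tfrac{1}{2(n-\alpha+2)}$ directly, so no further relaxation is needed.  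You instead keep the exact kernel $\left|(1-t)^{n-\alpha}-t^{\,n-\alpha}\right|$, exploit the $t\mapsto 1-t$ symmetry to show $J_1=J_2$, and evaluate the common integral sharply as $\dfrac{1-2^{-(n-\alpha)}}{n-\alpha+1}$, only afterwards relaxing to the paper's constant via the elementary inequality $\dfrac{1}{s+2}\le 2^{1-s}$ for $s\in(0,1)$.  Your route therefore yields a strictly sharper intermediate bound (indeed $\tfrac{1-2^{-(n-\alpha)}}{n-\alpha+1}\le\tfrac{1}{2(n-\alpha+1)}$ already), at the cost of one extra comparison at the end; the paper's route trades that sharpness for a one-line pointwise bound that makes the integration more transparent.
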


\begin{proof}
We know from our elementary knowledge that for $\alpha \in \lbrack 0,1]$ and 
$\forall t_{1},t_{2}\in \lbrack 0,1],\ \left\vert t_{1}^{n-\alpha
}-t_{2}^{n-\alpha }\right\vert \leq \left\vert t_{1}-t_{2}\right\vert
^{n-\alpha }\;.$
\end{proof}

\ let be

\begin{equation*}
K=\frac{f^{\left( n\right) }\left( a\right) +f^{\left( n\right) }\left(
b\right) }{2}-\frac{\Gamma \left( n-\alpha +1\right) }{2\left( b-a\right)
^{n-\alpha }}\left[ (^{C}D_{^{a^{+}}}^{\alpha
}f)(b)+(^{C}D_{^{b^{-}}}^{\alpha }f)(a)\right]
\end{equation*}

In Lemma 1, using the properties of the modulus as well as the fact that $%
\left\vert f^{\left( n+1\right) }\right\vert \ $is $m-$convex on $\left[ a,b%
\right] $ , we can write the relation below.

\begin{eqnarray*}
\left\vert K\right\vert &\leq &\frac{b-a}{2}\dint_{0}^{1}\left\vert \left(
1-t\right) ^{n-\alpha }-t^{n-\alpha }\right\vert \ \left\vert f^{\left(
n+1\right) }\left( ta+\left( 1-t\right) b\right) \right\vert dt\  \\
&\leq &\frac{b-a}{2}\dint_{0}^{1}\left\vert 1-2t\right\vert ^{n-\alpha
}\left\vert f^{\left( n+1\right) }\left( ta+\left( 1-t\right) b\right)
\right\vert dt \\
\ \ &=&\frac{b-a}{2}\dint_{0}^{1}\left\vert 1-2t\right\vert ^{n-\alpha
}\left\vert f^{\left( n+1\right) }\left( ta+m\left( 1-t\right) \frac{b}{m}%
\right) \right\vert dt \\
&\leq &\frac{b-a}{2}\left\{ 
\begin{array}{c}
\left\vert f^{\left( n+1\right) }\left( a\right) \right\vert \left(
\dint_{0}^{\frac{1}{2}}t\left( 1-t\right) ^{n-\alpha }dt+\dint_{\frac{1}{2}%
}^{1}t\left( 2t-1\right) ^{n-\alpha }dt\ \right) + \\ 
+m\left\vert f^{\left( n+1\right) }\left( \frac{b}{m}\right) \right\vert
\left( \dint_{0}^{\frac{1}{2}}\left( 1-t\right) \left( 1-2t\right)
^{n-\alpha }dt+\dint_{\frac{1}{2}}^{1}\left( 1-t\right) \left( 2t-1\right)
^{n-\alpha }dt\right)%
\end{array}%
\right\}
\end{eqnarray*}

Calculate the integrals in parentheses and multiply by their coefficients$,$%
we obtain inequality $\left( \ref{d4}\right) .$

\begin{corollary}
If \ $\alpha =n\in \{1,2,3,...\}\ $and usual derivative $f^{\left( n\right)
}\left( a\right) \ $of order n exists, then Caputo fractional derivatives $%
(^{C}D_{^{a^{+}}}^{\alpha }f)(a)\ $coincides with $f^{\left( n\right)
}\left( a\right) \;$whereas $(^{C}D_{^{b^{-}}}^{\alpha }f)(b)\;$coincides $%
f^{\left( n\right) }\left( b\right) \ $to a constant multipler $\left(
-1\right) ^{n}.\ $Thus if we choose$\;$and $\alpha =0\;$In $\left( \ref{d4}%
\right) \ $with $m=1\;$we obtain 
\begin{equation*}
\left\vert \frac{f\left( a\right) +f\left( b\right) }{2}-\frac{1}{2\left(
b-a\right) }\left[ f(b)+\left( -1\right) ^{n}f(a)\right] \right\vert \leq 
\frac{b-a}{4}\left( \left\vert f^{^{\prime \prime }}\left( a\right)
\right\vert +\left\vert f^{^{\prime \prime }}\left( b\right) \right\vert
\right)
\end{equation*}
\end{corollary}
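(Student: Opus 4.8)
The plan is to obtain this corollary not by a fresh estimate but as a direct specialization of the inequality (\ref{d4}) established in Theorem 3. The only genuinely new ingredient is the identification of the Caputo operators with classical derivatives at the boundary of the admissible parameter range, which is exactly the fact recorded after Definition 2: for $n=1$ and $\alpha=0$ one has $(^{C}D_{^{a^{+}}}^{0}f)(x)=(^{C}D_{^{b^{-}}}^{0}f)(x)=f(x)$, and more generally, when $\alpha=n\in\{1,2,3,\dots\}$ the left-sided operator returns $f^{(n)}$ while the right-sided operator returns $(-1)^{n}f^{(n)}$. I would first isolate this reduction as the governing principle, since it dictates what every fractional term on both sides of (\ref{d4}) becomes in the limit, and it is the content of the corollary's first sentence.

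Next I would substitute $\alpha=0$ and $m=1$ into (\ref{d4}). The right-hand side is the easy half: with $m=1$ the $m$-convexity term $m\,\lvert f^{(n+1)}(b/m)\rvert$ collapses to $\lvert f^{(n+1)}(b)\rvert$, so the bracket becomes $\lvert f^{(n+1)}(a)\rvert+\lvert f^{(n+1)}(b)\rvert$, and the rational prefactor $\frac{1}{2(n-\alpha+1)}+\frac{1}{2(n-\alpha+2)}$ is merely evaluated at the chosen value of $\alpha$ and then absorbed into $\frac{b-a}{2}$ to give the constant displayed in the claim. On the left-hand side I would replace each Caputo term by the corresponding classical derivative using the reduction above, and then simplify the normalizing factor $\frac{\Gamma(\alpha-n+1)}{2(b-a)^{n-\alpha}}$ at the same parameters; this factor together with the power $(b-a)^{n-\alpha}$ is what is meant to produce the constant $\frac{1}{2(b-a)}$ multiplying the bracket $[f(b)+(-1)^{n}f(a)]$.

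The step I expect to be the main obstacle is precisely the evaluation of that normalizing factor. Taking $\alpha=0$ forces $n=[\alpha]+1=1$, so the argument $\alpha-n+1$ lands on the nonpositive integer $0$, where $\Gamma$ has a pole; a naive substitution is therefore not legitimate, and one cannot simply read off $\frac{1}{2(b-a)}$. The honest route is to treat the identification ``Caputo $\to$ classical derivative'' as a genuine limit $\alpha\to n$ (equivalently $\alpha\downarrow 0$ with the differentiation order held fixed) and to track how the singular $\Gamma$ factor cancels against the vanishing fractional integral it multiplies, so that the combined normalizing constant has a finite limit. I would carry out this limit carefully and, in the same pass, confirm that the derivative orders appearing on the two sides are mutually consistent (the left side should carry $f^{(n)}$ and the right side $f^{(n+1)}$ before the reduction is applied). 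Once the limiting value of the prefactor is pinned down, collecting the surviving terms yields the stated bound, and the parenthetical integrals inherited from Theorem 3 are the only remaining computation, which is routine and need not be repeated here.
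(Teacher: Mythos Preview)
Your plan---specialize (\ref{d4}) by setting $\alpha=0$, $m=1$ and use the reduction of the Caputo operators to classical objects---is exactly what the paper does; the corollary is recorded without any separate argument.

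The obstacle you flag, however, is not a genuine one. The prefactor $\Gamma(\alpha-n+1)$ in the printed form of (\ref{d4}) is a misprint: compare with Lemma~1, where the same expression carries $\Gamma(n-\alpha+1)$. With $n=1$ and $\alpha=0$ this is $\Gamma(2)=1$, and together with $(b-a)^{n-\alpha}=b-a$ it gives the factor $\tfrac{1}{2(b-a)}$ immediately---no limiting procedure or cancellation of a pole against a vanishing integral is required. So the ``main obstacle'' paragraph of your proposal can be dropped entirely.

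Your instinct to double-check the consistency of derivative orders and constants is nonetheless well placed. A literal substitution of $n=1$, $\alpha=0$ into (\ref{d4}) produces $f^{(n)}=f'$ in the first term on the left (not $f$) and the numerical prefactor $\tfrac{1}{2}\bigl(\tfrac{1}{2\cdot2}+\tfrac{1}{2\cdot3}\bigr)=\tfrac{5}{24}$ on the right (not $\tfrac{1}{4}$). These discrepancies are further typographical slips in the stated corollary, not gaps that your argument must fill; the substitution itself is routine once the correct $\Gamma$ factor from Lemma~1 is used.
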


\begin{theorem}
\bigskip Let $f:\left[ a,b\right] \rightarrow \left( -\infty ,\infty \right)
\;$be a differentiable mapping on $a<b,\ $If $\alpha >0$ and $\alpha \notin $
$\{1,2,3,...\}$, $n=[\alpha ]+1,\ q>1,$ $p=\frac{q}{q-1}\;$and\ $f^{\left(
n+1\right) }\in L\left[ a,b\right] \;$and $\left\vert \ f^{\left( n+1\right)
}\right\vert ^{q}\ $is $m-$convex, $m\in \left( 0,1\right] $ \ 
\end{theorem}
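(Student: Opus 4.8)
The plan is to imitate the derivation of Theorem 3, but to apply the Hölder inequality to the integral coming from Lemma 1 instead of estimating $\left|f^{(n+1)}\right|$ pointwise. Setting $x=a$ and $y=b$ in Lemma 1 and abbreviating
\[
K=\frac{f^{(n)}(a)+f^{(n)}(b)}{2}-\frac{\Gamma(n-\alpha+1)}{2(b-a)^{n-\alpha}}\left[(^{C}D_{a^{+}}^{\alpha}f)(b)+(-1)^{n}(^{C}D_{b^{-}}^{\alpha}f)(a)\right],
\]
the identity together with the triangle inequality gives $\left|K\right|\le\frac{b-a}{2}\int_{0}^{1}\left|(1-t)^{n-\alpha}-t^{n-\alpha}\right|\,\left|f^{(n+1)}(ta+(1-t)b)\right|\,dt$. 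First I would split the integrand as a product of $\left|(1-t)^{n-\alpha}-t^{n-\alpha}\right|$ and $\left|f^{(n+1)}(ta+(1-t)b)\right|$ and apply the Hölder inequality recorded in the preliminaries with exponents $p$ and $q$, obtaining
\[
\left|K\right|\le\frac{b-a}{2}\left(\int_{0}^{1}\left|(1-t)^{n-\alpha}-t^{n-\alpha}\right|^{p}dt\right)^{1/p}\left(\int_{0}^{1}\left|f^{(n+1)}(ta+(1-t)b)\right|^{q}dt\right)^{1/q}.
\]

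Next I would bound the first factor. Because $n=[\alpha]+1$ forces $n-\alpha\in(0,1)$, the elementary inequality already quoted in the proof of Theorem 3, namely $\left|t_{1}^{\,n-\alpha}-t_{2}^{\,n-\alpha}\right|\le\left|t_{1}-t_{2}\right|^{n-\alpha}$, applies with $t_{1}=1-t$ and $t_{2}=t$ to give $\left|(1-t)^{n-\alpha}-t^{n-\alpha}\right|\le\left|1-2t\right|^{n-\alpha}$. Raising to the $p$-th power and integrating, a short computation (substituting $u=1-2t$, or splitting the integral at $t=\tfrac12$) yields $\int_{0}^{1}\left|1-2t\right|^{p(n-\alpha)}dt=\frac{1}{p(n-\alpha)+1}$, which is finite since $p(n-\alpha)>0$.

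For the second factor I would invoke the $m$-convexity of $\left|f^{(n+1)}\right|^{q}$ in the modified form listed in the preliminaries, i.e.\ $\left|f^{(n+1)}(ta+(1-t)b)\right|^{q}\le t\left|f^{(n+1)}(a)\right|^{q}+m(1-t)\left|f^{(n+1)}(b/m)\right|^{q}$. Integrating over $[0,1]$ and using $\int_{0}^{1}t\,dt=\int_{0}^{1}(1-t)\,dt=\tfrac12$ produces $\int_{0}^{1}\left|f^{(n+1)}(ta+(1-t)b)\right|^{q}dt\le\frac{\left|f^{(n+1)}(a)\right|^{q}+m\left|f^{(n+1)}(b/m)\right|^{q}}{2}$. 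Substituting both estimates back then gives the expected bound
\[
\left|K\right|\le\frac{b-a}{2}\left(\frac{1}{p(n-\alpha)+1}\right)^{1/p}\left(\frac{\left|f^{(n+1)}(a)\right|^{q}+m\left|f^{(n+1)}(b/m)\right|^{q}}{2}\right)^{1/q}.
\]

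I expect the only genuine subtlety to lie in the kernel estimate. The inequality $\left|t_{1}^{\,n-\alpha}-t_{2}^{\,n-\alpha}\right|\le\left|t_{1}-t_{2}\right|^{n-\alpha}$ is exactly the concavity/subadditivity of $s\mapsto s^{n-\alpha}$ on $[0,\infty)$ and is valid precisely because the exponent $n-\alpha$ lies in $(0,1)$; I would flag explicitly that $n=[\alpha]+1$ guarantees this range. One could instead try to evaluate $\int_{0}^{1}\left|(1-t)^{n-\alpha}-t^{n-\alpha}\right|^{p}dt$ exactly, but for general $p$ this has no clean closed form, so passing through $\left|1-2t\right|^{n-\alpha}$ is the practical route and costs only a constant factor in the final estimate.
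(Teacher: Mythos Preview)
Your proposal is correct and follows essentially the same route as the paper: start from Lemma~1, pass from $\left|(1-t)^{n-\alpha}-t^{n-\alpha}\right|$ to $\left|1-2t\right|^{n-\alpha}$ via the subadditivity of $s\mapsto s^{n-\alpha}$ for $n-\alpha\in(0,1)$, apply H\"older with exponents $p,q$, and then use the $m$-convexity of $\left|f^{(n+1)}\right|^{q}$ together with $\int_{0}^{1}t\,dt=\int_{0}^{1}(1-t)\,dt=\tfrac12$. The only cosmetic difference is that the paper first replaces the kernel by $\left|1-2t\right|^{n-\alpha}$ and then applies H\"older, whereas you apply H\"older first and bound the $p$-integral afterwards; the resulting estimate is identical.
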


Then the following inequality holds:

\begin{eqnarray}
&&\left\vert \frac{f^{\left( n\right) }\left( a\right) +f^{\left( n\right)
}\left( b\right) }{2}-\frac{\Gamma \left( n-\alpha +1\right) }{2\left(
b-a\right) ^{n-\alpha }}\left[ (^{C}D_{^{a^{+}}}^{\alpha }f)(b)+\left(
-1\right) ^{n}(^{C}D_{^{b^{-}}}^{\alpha }f)(a)\right] \right\vert
\label{d-1} \\
&\leq &\frac{b-a}{2^{1+\frac{1}{q}}}\left( \frac{1}{\left( p\left( n-\alpha
\right) +1\right) ^{\frac{1}{p}}}\right) \left( \left\vert \ f^{\left(
n+1\right) }\left( a\right) \right\vert ^{q}+m\left\vert \ f^{\left(
n+1\right) }\left( \frac{b}{m}\right) \right\vert ^{q}\right) ^{\frac{1}{q}%
}\   \notag
\end{eqnarray}

\begin{proof}
Let the left side of Lemma 1 be $K.$
\end{proof}

Since $\alpha \in \lbrack 0,1]$ and $\forall t_{1},t_{2}\in \lbrack 0,1],\
\left\vert t_{t}^{n-\alpha }-t_{2}^{n-\alpha }\right\vert \leq \left\vert
t_{1}-t_{2}\right\vert ^{n-\alpha }\ $we can write the following inequality
with properties of modulus:

\begin{equation*}
\left\vert K\right\vert \leq \frac{b-a}{2}\dint_{0}^{1}\left\vert
1-2t\right\vert ^{n-\alpha }\left\vert f^{\left( n+1\right) }\left(
ta+\left( 1-t\right) b\right) \right\vert dt
\end{equation*}%
By applying H\"{o}lder's\ inequality to the right hand side of the above
inequality with properties of modulus and after If we use $\left\vert \
f^{\left( n+1\right) }\right\vert ^{q}\ $is $m-$convex, we have%
\begin{eqnarray*}
\left\vert K\right\vert &\leq &\frac{b-a}{2}\dint_{0}^{1}\left\vert
1-2t\right\vert ^{n-\alpha }\left\vert f^{\left( n+1\right) }\left(
ta+\left( 1-t\right) b\right) \right\vert dt \\
&\leq &\frac{b-a}{2}\left( \dint_{0}^{1}\left\vert 1-2t\right\vert ^{p\left(
n-\alpha \right) }dt\right) ^{\frac{1}{p}}\left( \dint_{0}^{1}\left\vert
f^{\left( n+1\right) }\left( ta+m\left( 1-t\right) \frac{b}{m}\right)
\right\vert ^{q}\right) ^{\frac{1}{q}} \\
&\leq &\frac{b-a}{2}\left( \frac{1}{\left( p\left( n-\alpha \right)
+1\right) ^{\frac{1}{p}}}\right) ^{\frac{1}{p}}\left( \frac{\left\vert
f^{\left( n+1\right) }\left( a\right) \right\vert ^{q}+m\left\vert f^{\left(
n+1\right) }\left( \frac{b}{m}\right) \right\vert ^{q}}{2}\right) ^{\frac{1}{%
q}} \\
&=&\frac{b-a}{2^{1+\frac{1}{q}}}\left( \frac{1}{\left( p\left( n-\alpha
\right) +1\right) ^{\frac{1}{p}}}\right) ^{\frac{1}{p}}\left( \left\vert
f^{\left( n+1\right) }\left( a\right) \right\vert ^{q}+m\left\vert f^{\left(
n+1\right) }\left( \frac{b}{m}\right) \right\vert ^{q}\right) ^{\frac{1}{q}}
\end{eqnarray*}

This completes the proof of inequality$\left( \ref{d-1}\right) .$Here it can
be easily checked that 
\begin{eqnarray*}
\left( \dint_{0}^{1}\left\vert 1-2t\right\vert ^{p\left( n-\alpha \right)
}dt\right) ^{\frac{1}{p}} &=&\frac{1}{\left( p\left( n-\alpha \right)
+1\right) ^{\frac{1}{p}}}\ ,\text{\ } \\
\left\vert f^{\left( n+1\right) }\left( a\right) \right\vert ^{q}\text{\ }%
\dint_{0}^{1}tdt &=&\frac{\text{\ }\left\vert f^{\left( n+1\right) }\left(
a\right) \right\vert ^{q}}{2},\ \ \text{\ } \\
\ \ \ \ \ \ \ \ \ \ \ \ \ \ \ \ \ \ \ \ \ \ m\left\vert f^{\left( n+1\right)
}\left( \frac{b}{m}\right) \right\vert ^{q}\text{\ }\dint_{0}^{1}\left(
1-t\right) dt &=&m\frac{\text{\ }\left\vert f^{\left( n+1\right) }\left( 
\frac{b}{m}\right) \right\vert ^{q}}{2}\text{\ }
\end{eqnarray*}

\begin{corollary}
If we write corollary 1 for the Theorem $\left( \ref{d-1}\right) $ we have 
\begin{equation*}
\left\vert \frac{f\left( a\right) +f\left( b\right) }{2}-\frac{1}{2\left(
b-a\right) }\left[ f(b)+\left( -1\right) ^{n}f(a)\right] \right\vert \leq 
\frac{b-a}{2^{1+\frac{1}{q}}}\left( \frac{1}{\left( p+1\right) ^{\frac{1}{p}}%
}\right) \left( \left\vert f^{^{\prime \prime }}\left( a\right) \right\vert
^{q}+\left\vert f^{^{\prime \prime }}\left( b\right) \right\vert ^{q}\right)
^{\frac{1}{q}}
\end{equation*}%
On the other hand , let $a_{1}=\left\vert f^{^{\prime \prime }}\left(
a\right) \right\vert ^{q},\;b_{1}=\left\vert f^{^{\prime \prime }}\left(
b\right) \right\vert ^{q}\;.\;$Here $0<\frac{q-1}{q}<1,\ $for $q>1\ .\ $%
Using the fact that $\dsum\limits_{k=1}^{n}\left( a_{k}+b_{k}\right)
^{s}\leq \dsum\limits_{k=1}^{n}a_{k}^{s}+b_{k}^{s}\ ,\ for\;\left( 0\leq
s<1\right) ,\;$
\end{corollary}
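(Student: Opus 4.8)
The plan is to reach the Corollary in two stages, exactly as its statement hints. First I would specialize the theorem that established inequality (\ref{d-1}) to the integer-order situation described in Corollary 1. Choosing $\alpha =0$ forces $n=[\alpha]+1=1$, so the Caputo fractional derivatives collapse to the ordinary function values $f(b)$ and $(-1)^{n}f(a)$, while $f^{(n)}=f$ and $f^{(n+1)}=f''$. Taking also $m=1$ gives $f^{(n+1)}(b/m)=f''(b)$. It then remains to evaluate the constants in (\ref{d-1}) at $\alpha=0,\ n=1$: one has $\Gamma(n-\alpha+1)=\Gamma(2)=1$ and $(b-a)^{\,n-\alpha}=b-a$, so the coefficient $\Gamma(n-\alpha+1)/\bigl(2(b-a)^{n-\alpha}\bigr)$ becomes $1/\bigl(2(b-a)\bigr)$; and $p(n-\alpha)+1=p+1$, so the H\"older factor $\bigl(p(n-\alpha)+1\bigr)^{-1/p}$ becomes $(p+1)^{-1/p}$. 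Substituting these into (\ref{d-1}) produces precisely the first displayed inequality of the Corollary.

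In the second stage I would sharpen its right-hand side. The only tool needed is the elementary inequality
\begin{equation*}
(u+v)^{s}\le u^{s}+v^{s},\qquad u,v\ge 0,\ 0\le s\le 1,
\end{equation*}
which is the $n=1$ instance of the bound $\sum(a_k+b_k)^s\le\sum a_k^s+b_k^s$ quoted at the end of the Corollary. I would apply it with $u=\left\vert f''(a)\right\vert^{q}$, $v=\left\vert f''(b)\right\vert^{q}$ and $s=1/q$; since $q>1$ we have $0<1/q<1$, so the hypothesis on $s$ holds, giving
\begin{equation*}
\Bigl(\left\vert f''(a)\right\vert^{q}+\left\vert f''(b)\right\vert^{q}\Bigr)^{1/q}\le \left\vert f''(a)\right\vert+\left\vert f''(b)\right\vert .
\end{equation*}
Inserting this into the first displayed inequality yields the final bound
\begin{equation*}
\left\vert \frac{f(a)+f(b)}{2}-\frac{1}{2(b-a)}\bigl[f(b)+(-1)^{n}f(a)\bigr]\right\vert \le \frac{b-a}{2^{1+1/q}}\,\frac{\left\vert f''(a)\right\vert+\left\vert f''(b)\right\vert}{(p+1)^{1/p}} .
\end{equation*}

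I expect the only genuinely non-routine point to be justifying $(u+v)^{s}\le u^{s}+v^{s}$; I would establish it by noting that $\phi(t)=t^{s}$ is concave on $[0,\infty)$ with $\phi(0)=0$, hence subadditive, so $\phi(u+v)\le\phi(u)+\phi(v)$. A minor bookkeeping caveat is worth flagging: the exponent to which this inequality is actually applied is $1/q$, whereas the Corollary's narrative singles out $(q-1)/q=1/p$; both lie in $(0,1)$ when $q>1$, so the argument is unaffected, but the value genuinely used is $s=1/q$. Everything else is the constant-chasing performed in the first stage.
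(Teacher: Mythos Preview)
Your first stage is exactly the paper's argument: the Corollary is obtained by inserting $\alpha=0$, $n=1$, $m=1$ into Theorem~(\ref{d-1}), and your bookkeeping of the constants $\Gamma(2)=1$, $(b-a)^{n-\alpha}=b-a$, $p(n-\alpha)+1=p+1$ is correct. Your second stage also begins the same way as the paper --- applying the subadditivity bound $(u+v)^{1/q}\le u^{1/q}+v^{1/q}$ to $u=|f''(a)|^{q}$, $v=|f''(b)|^{q}$ --- and your remark that the effective exponent is $s=1/q$ rather than the $(q-1)/q$ flagged in the text is well taken.

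The only divergence is that the paper then pushes one step further than you do. Immediately after the Corollary it invokes the elementary bounds (phrased as limits)
\[
\frac{1}{(p+1)^{1/p}}\le 1 \quad\text{and}\quad \frac{1}{2^{\,1+1/q}}\le \frac12,
\]
valid for all $p,q>1$, to strip out the $p,q$ dependence and arrive at the clean estimate
\[
\left\vert \frac{f(a)+f(b)}{2}-\frac{1}{2(b-a)}\bigl[f(b)+(-1)^{n}f(a)\bigr]\right\vert \le \frac{b-a}{2}\bigl(|f''(a)|+|f''(b)|\bigr),
\]
which is the form subsequently compared with Corollary~1 in Remark~1. Your stopping point $\dfrac{b-a}{2^{\,1+1/q}(p+1)^{1/p}}\bigl(|f''(a)|+|f''(b)|\bigr)$ is actually sharper, but it is not the paper's intended endpoint; adding this last crude majorization would align your argument completely with the paper.
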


$a_{1},\;a_{2,}\;a_{3},...a_{n}\geq 0,\;b_{1},\;b_{2,}\;b_{3},...b_{n}\geq
0\;$and Considering that 
\begin{equation*}
\lim_{p\rightarrow \infty }\frac{1}{\left( p+1\right) ^{\frac{1}{p}}}=1\ 
\text{and }\lim_{q\rightarrow \infty }\frac{1}{2^{1+\frac{1}{q}}}=\frac{1}{2}
\end{equation*}

we obtain

\begin{equation*}
\left\vert \frac{f\left( a\right) +f\left( b\right) }{2}-\frac{1}{2\left(
b-a\right) }\left[ f(b)+\left( -1\right) ^{n}f(a)\right] \right\vert \leq 
\frac{b-a}{2}\left( \left\vert f\ ^{\prime \prime }\left( a\right)
\right\vert +\left\vert f^{\prime \prime }\left( b\right) \right\vert \right)
\end{equation*}

\begin{remark}
Note that the right side of Corollary 1 is a better upper bound than the
right side of Corollary 2.
\end{remark}

\begin{theorem}
Let $f:I\subset \mathbb{R}\rightarrow \mathbb{R},\ I\subset \lbrack 0,\infty
)$, be a differentiable function on $I$ such that $f^{(n+1)}\in L[a,b]$ \
with $a\leq x<y\leq b,\ t\in \left[ 0,1\right] \ $.If $f^{(n+1)}\ $is $m-$%
convex on $\left[ x,y\right] .\ $The for all $\alpha >0,m\in \left( 0,1%
\right] $%
\begin{equation}
\frac{1}{y-x}f^{\left( n\right) }\left( y\right) -\frac{\left( -1\right)
^{n}\Gamma \left( n-\alpha +1\right) }{\left( y-x\right) ^{n-\alpha +1}}%
\left( C_{D_{y^{-}}^{\alpha }}f\right) \left( x\right) \leq f\left( x\right) 
\frac{n-\alpha }{n-\alpha +2}\beta \left( 2,n-\alpha \right) +mf\left( \frac{%
y}{m}\right) \frac{1}{n-\alpha +1}.  \label{d-2}
\end{equation}
\end{theorem}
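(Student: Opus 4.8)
The plan is to reduce everything to the integral identity already recorded in Lemma~2 and then push the $m$-convexity hypothesis through a single integration. Concretely, Lemma~2 tells us that the left-hand side of the claimed inequality is \emph{exactly}
\[
\frac{1}{y-x}f^{(n)}(y)-\frac{(-1)^{n}\Gamma(n-\alpha+1)}{(y-x)^{n-\alpha+1}}\left(C_{D_{y^{-}}^{\alpha}}f\right)(x)=\int_{0}^{1}(1-t)^{n-\alpha}f^{(n+1)}\!\left(tx+(1-t)y\right)dt,
\]
so no new identity is needed; the whole content is to estimate this single integral from above.

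First I would rewrite the argument in the modified $m$-convex form $tx+(1-t)y=tx+m(1-t)\tfrac{y}{m}$, exactly as in the ``modified forms'' listed in the introduction. Since $f^{(n+1)}$ is assumed $m$-convex on $[x,y]$, this gives the pointwise bound
\[
f^{(n+1)}\!\left(tx+(1-t)y\right)\leq t\,f^{(n+1)}(x)+m(1-t)\,f^{(n+1)}\!\left(\tfrac{y}{m}\right).
\]
Because the kernel $(1-t)^{n-\alpha}$ is nonnegative on $[0,1]$, multiplying by it preserves the inequality, and integrating over $[0,1]$ splits the estimate into the two separate integrals
\[
f^{(n+1)}(x)\int_{0}^{1}t(1-t)^{n-\alpha}\,dt+m\,f^{(n+1)}\!\left(\tfrac{y}{m}\right)\int_{0}^{1}(1-t)^{\,n-\alpha+1}\,dt.
\]

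The remaining work is purely the evaluation of these two Beta-type integrals. For the first, $\int_{0}^{1}t(1-t)^{n-\alpha}\,dt=\beta(2,n-\alpha+1)$, and applying the standard recursion $\beta(a,b+1)=\frac{b}{a+b}\beta(a,b)$ with $a=2$ and $b=n-\alpha$ rewrites this in the displayed form $\frac{n-\alpha}{n-\alpha+2}\beta(2,n-\alpha)$; the second is the elementary integral $\int_{0}^{1}(1-t)^{\,n-\alpha+1}\,dt$. Collecting the two contributions and substituting them back produces the right-hand side of \eqref{d-2}.

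The only genuinely delicate point, and the step I would treat most carefully, is that—unlike the earlier theorems—no absolute values are taken: the inequality is asserted directly for $f^{(n+1)}$ rather than $\lvert f^{(n+1)}\rvert$. This is legitimate \emph{only} because the $m$-convexity is imposed on $f^{(n+1)}$ itself (so the convexity inequality holds with its natural sign) and because the weight $(1-t)^{n-\alpha}$ never changes sign on $[0,1]$, so the direction of the inequality survives integration. The rest is bookkeeping: one must match the Beta-function normalization exactly, keeping track of the shift from $\beta(2,n-\alpha+1)$ to the displayed $\frac{n-\alpha}{n-\alpha+2}\beta(2,n-\alpha)$ and confirming the exponent $n-\alpha+1$ in the second integral, so as to land on the precise constants in \eqref{d-2}.
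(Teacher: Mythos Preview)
Your approach is essentially identical to the paper's: start from Lemma~2, insert the $m$-convex bound for $f^{(n+1)}$ in the modified form $tx+m(1-t)\tfrac{y}{m}$, split into two weighted integrals, and identify the first as $\beta(2,n-\alpha+1)=\tfrac{n-\alpha}{n-\alpha+2}\beta(2,n-\alpha)$.

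One point to flag: your second integral $\int_{0}^{1}(1-t)^{n-\alpha+1}\,dt$ is the correct consequence of the $m$-convexity step, but it evaluates to $\tfrac{1}{n-\alpha+2}$, not the $\tfrac{1}{n-\alpha+1}$ appearing in \eqref{d-2}. The paper's own proof has the same mismatch (it writes the exponent as $2(n-\alpha)$ and then records the value $\tfrac{1}{n-\alpha+1}$, neither of which agrees with the other or with the derivation). So you have faithfully reproduced the paper's argument; the constant in the statement appears to be a typo rather than a defect in your reasoning.
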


\begin{proof}
From lemma 2, we have 
\begin{eqnarray*}
\frac{1}{y-x}f^{\left( n\right) }\left( y\right) -\frac{\left( -1\right)
^{n}\Gamma \left( n-\alpha +1\right) }{\left( y-x\right) ^{n-\alpha +1}}%
\left( C_{D_{y^{-}}^{\alpha }}f\right) \left( x\right)
&=&\dint_{0}^{1}\left( 1-t\right) ^{n-\alpha }f^{\left( n+1\right) }\left(
tx+m\left( 1-t\right) \frac{y}{m}\right) dt. \\
&\leq &f\left( x\right) \dint_{0}^{1}t\left( 1-t\right) ^{n-\alpha
}dt+mf\left( \frac{y}{m}\right) \dint_{0}^{1}\left( 1-t\right) ^{2\left(
n-\alpha \right) }dt \\
&=&f\left( x\right) \beta \left( 2,n-\alpha +1\right) +mf\left( \frac{y}{m}%
\right) \frac{1}{n-\alpha +1} \\
&=&f\left( x\right) \frac{n-\alpha }{n-\alpha +2}\beta \left( 2,n-\alpha
\right) +mf\left( \frac{y}{m}\right) \frac{1}{n-\alpha +1} \\
&&
\end{eqnarray*}%
which gives the required inequality $\left( \ref{d-2}\right) $. Here we used
the property of the known function $\beta $. 
\begin{equation*}
\beta \left( 2,n-\alpha +1\right) =\frac{n-\alpha }{n-\alpha +2}\beta \left(
2,n-\alpha \right) .
\end{equation*}

\begin{corollary}
If we choose $x=a,\;y=b\;$and\ $m=1\;$in $\left( \ref{d-2}\right) \;$we have
the following inequality%
\begin{equation*}
\frac{1}{b-a}f^{\left( n\right) }\left( b\right) -\frac{\left( -1\right)
^{n}\Gamma \left( n-\alpha +1\right) }{\left( b-a\right) ^{n-\alpha +1}}%
\left( C_{D_{b^{-}}^{\alpha }}f\right) \left( a\right) \leq f\left( a\right) 
\frac{n-\alpha }{n-\alpha +2}\beta \left( 2,n-\alpha \right) +f\left(
b\right) \frac{1}{n-\alpha +1}
\end{equation*}
\end{corollary}
\end{proof}

\begin{theorem}
$\alpha >0,\ $let\ $f:I\subset \mathbb{R}\rightarrow \mathbb{R},\ I\subset
\lbrack 0,\infty )$, be a differentiable function on $I$ such that $%
f^{(n+1)}\in L[a,b]$ \ with $a\leq x<y\leq b,\ t\in \left[ 0,1\right] .$If $%
\left\vert f^{(n+1)}\right\vert ^{q}\ $is $m-$convex on $\left[ x,y\right]
,\ q>1,$ $p=\frac{q}{q-1},m\in \left( 0,1\right] $
\end{theorem}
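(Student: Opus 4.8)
Since the stated hypotheses match the Hölder pattern already used for inequality~(\ref{d-1}), I expect the intended conclusion to be
\[
\left|\frac{1}{y-x}f^{(n)}(y)-\frac{(-1)^{n}\Gamma(n-\alpha+1)}{(y-x)^{n-\alpha+1}}\left(C_{D_{y^{-}}^{\alpha}}f\right)(x)\right|\le\left(\frac{1}{p(n-\alpha)+1}\right)^{\frac{1}{p}}\left(\frac{\left|f^{(n+1)}(x)\right|^{q}+m\left|f^{(n+1)}\left(\frac{y}{m}\right)\right|^{q}}{2}\right)^{\frac{1}{q}},
\]
and the plan is to derive it from Lemma 2 exactly as the proof of~(\ref{d-1}) derives its bound from Lemma 1. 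First I would set $K$ equal to the left-hand side and invoke Lemma 2 to write $K=\int_{0}^{1}(1-t)^{n-\alpha}f^{(n+1)}(tx+(1-t)y)\,dt$; passing the modulus inside the integral gives $|K|\le\int_{0}^{1}(1-t)^{n-\alpha}\left|f^{(n+1)}(tx+(1-t)y)\right|dt$.

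Next I would apply Hölder's inequality with the conjugate exponents $p,q$, separating the kernel $(1-t)^{n-\alpha}$ from the derivative factor, which yields
\[
|K|\le\left(\int_{0}^{1}(1-t)^{p(n-\alpha)}\,dt\right)^{\frac{1}{p}}\left(\int_{0}^{1}\left|f^{(n+1)}(tx+(1-t)y)\right|^{q}dt\right)^{\frac{1}{q}}.
\]
The first integral is elementary: because $n=[\alpha]+1$ forces $n-\alpha\in(0,1)$, the exponent $p(n-\alpha)$ is positive, the integral converges, and it equals $\frac{1}{p(n-\alpha)+1}$, producing the factor $\left(\frac{1}{p(n-\alpha)+1}\right)^{1/p}$.

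For the second integral I would rewrite the argument as $tx+(1-t)y=tx+m(1-t)\frac{y}{m}$ and invoke the modified $m$-convexity of $\left|f^{(n+1)}\right|^{q}$ recorded in the preliminaries, giving $\left|f^{(n+1)}(tx+(1-t)y)\right|^{q}\le t\left|f^{(n+1)}(x)\right|^{q}+m(1-t)\left|f^{(n+1)}\left(\frac{y}{m}\right)\right|^{q}$. Integrating termwise with $\int_{0}^{1}t\,dt=\int_{0}^{1}(1-t)\,dt=\frac{1}{2}$ collapses this to $\frac{1}{2}\left(\left|f^{(n+1)}(x)\right|^{q}+m\left|f^{(n+1)}\left(\frac{y}{m}\right)\right|^{q}\right)$, and multiplying the two factors gives the stated bound.

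I anticipate no substantive obstacle: the argument is structurally identical to that for~(\ref{d-1}). In fact the one-sided kernel $(1-t)^{n-\alpha}$ produces, after raising to the $p$-th power and integrating, the very same first factor $\frac{1}{p(n-\alpha)+1}$ that the symmetric kernel $|1-2t|^{n-\alpha}$ gave there, and the $m$-convexity step is verbatim the same. The one genuine difference is that the identity in Lemma 2 carries no $\frac{b-a}{2}$ prefactor, so the final bound is free of the $\frac{b-a}{2^{1+1/q}}$ factor appearing in~(\ref{d-1}). The sole points demanding care are bookkeeping: checking $p(n-\alpha)>-1$ so that Hölder is applied to an integrable weight (automatic, since $n-\alpha\in(0,1)$), and using the $y/m$-form of $m$-convexity rather than evaluating at $y$, since $y/m\ge y$ may leave $[x,y]$ when $m<1$ while still lying in the domain $[0,\infty)$ on which $f^{(n+1)}$ is defined.
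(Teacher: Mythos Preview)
Your argument is internally correct, but you have guessed the wrong target inequality and therefore also the wrong technique. The paper's stated conclusion~(\ref{d-3}) is
\[
\left|\,\cdot\,\right|\le\left(\frac{1}{n-\alpha+1}\right)^{\frac{1}{p}}\left(\left|f^{(n+1)}(x)\right|^{q}\beta(2,n-\alpha+1)+m\left|f^{(n+1)}\!\left(\tfrac{y}{m}\right)\right|^{q}\frac{1}{2(n-\alpha)+1}\right)^{\frac{1}{q}},
\]
which differs from your bound in both factors. The reason is that the paper does \emph{not} apply H\"older with the full kernel $(1-t)^{n-\alpha}$ pushed into the first integral; instead it uses the power-mean inequality with $(1-t)^{n-\alpha}$ kept as a weight in \emph{both} integrals:
\[
\int_{0}^{1}(1-t)^{n-\alpha}\left|f^{(n+1)}\right|\,dt\le\left(\int_{0}^{1}(1-t)^{n-\alpha}dt\right)^{1-\frac{1}{q}}\left(\int_{0}^{1}(1-t)^{n-\alpha}\left|f^{(n+1)}\right|^{q}dt\right)^{\frac{1}{q}}.
\]
The first factor then gives $(n-\alpha+1)^{-1/p}$ rather than your $(p(n-\alpha)+1)^{-1/p}$, and in the second factor the $m$-convexity bound is multiplied by the weight before integrating, producing $\int_{0}^{1}t(1-t)^{n-\alpha}dt=\beta(2,n-\alpha+1)$ and $\int_{0}^{1}(1-t)^{2(n-\alpha)}dt=\frac{1}{2(n-\alpha)+1}$ in place of the bare $\tfrac{1}{2}$'s you obtain.

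So your H\"older route is a legitimate alternative and yields a valid (and arguably cleaner) estimate, but it is not the inequality the paper actually states or proves. If you are asked to reproduce the paper's result, switch from H\"older to the weighted power-mean split above; if you are free to offer a variant, your bound stands on its own, though you should present it as a different inequality rather than as a proof of~(\ref{d-3}).
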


Then

$\ \ \ \ \ \ \ \ \ \ \ \ \ \ $%
\begin{equation*}
\left\vert \frac{1}{y-x}f^{\left( n\right) }\left( y\right) -\frac{\left(
-1\right) ^{n}\Gamma \left( n-\alpha +1\right) }{\left( y-x\right)
^{n-\alpha +1}}\left( C_{D_{y^{-}}^{\alpha }}f\right) \left( x\right)
\right\vert
\end{equation*}%
\begin{equation}
\leq \left( \frac{1}{\left( n-\alpha +1\right) ^{\frac{1}{p}}}\right) \left(
\left\vert f^{(n+1)}\right\vert ^{q}\left( x\right) \beta \left( 2,n-\alpha
+1\right) +m\left\vert f^{(n+1)}\right\vert ^{q}\left( \frac{y}{m}\right) 
\frac{1}{2\left( n-\alpha \right) +1}\right) ^{\frac{1}{q}}  \label{d-3}
\end{equation}

\bigskip

\begin{proof}
\bigskip Firstly, from lemma 2 and with properties of modulus and $m-$convex
of the function $\left\vert f^{(n+1)}\right\vert ^{q},\sec $ondly If we use
power mean inequality ;\bigskip 
\begin{equation*}
\left\vert \frac{1}{y-x}f^{\left( n\right) }\left( y\right) -\frac{\left(
-1\right) ^{n}\Gamma \left( n-\alpha +1\right) }{\left( y-x\right)
^{n-\alpha +1}}\left( C_{D_{y^{-}}^{\alpha }}f\right) \left( x\right)
\right\vert \leq \dint_{0}^{1}\left\vert \left( 1-t\right) ^{n-\alpha
}f^{\left( n+1\right) }\left( tx+\left( 1-t\right) y\right) \right\vert dt.
\end{equation*}%
\begin{equation*}
=\dint_{0}^{1}\left( 1-t\right) ^{n-\alpha }\left\vert f^{\left( n+1\right)
}\left( tx+m\left( 1-t\right) \frac{y}{m}\right) \right\vert dt
\end{equation*}%
\begin{equation*}
=\left( \dint_{0}^{1}\left( 1-t\right) ^{n-\alpha }dt\right) ^{\frac{1}{p}%
}\left( \dint_{0}^{1}\left( 1-t\right) ^{n--\alpha }\left\vert f^{\left(
n+1\right) }\left( tx+m\left( 1-t\right) \frac{y}{m}\right) \right\vert ^{%
\frac{1}{q}}dt\right) ^{\frac{1}{q}}
\end{equation*}
\end{proof}

\bigskip $\leq \left( \frac{1}{\left( n-\alpha +1\right) ^{\frac{1}{p}}}%
\right) .\left( \left\vert f^{(n+1)}\right\vert ^{q}\left( x\right)
\dint_{0}^{1}t\left( 1-t\right) ^{n--\alpha }dt+m\left\vert
f^{(n+1)}\right\vert ^{q}\left( \frac{y}{m}\right) \dint_{0}^{1}\left(
1-t\right) ^{2\left( n-\alpha \right) }dt\right) ^{\frac{1}{q}}$

\begin{equation*}
=\left( \frac{1}{\left( n-\alpha +1\right) ^{\frac{1}{p}}}\right) \left(
\left\vert f^{(n+1)}\right\vert ^{q}\left( x\right) \beta \left( 2,n-\alpha
+1\right) +m\left\vert f^{(n+1)}\right\vert ^{q}\left( \frac{y}{m}\right) 
\frac{1}{2\left( n-\alpha \right) +1}\right) ^{\frac{1}{q}}\ \ \ 
\end{equation*}

\bigskip $\ \ \ \ \ \ \ \ \ \ \ \ \ \ \ \ \ \ \ \ \ \ \ \ \ \ \ \ \ \ \ \ \
\ \ \ \ \ \ \ \ \ \ \ \ \ \ \ \ \ \ \ \ \ \ \ \ \ \ \ \ \ \ \ \ \ \ $

which gives the desired inequality$\left( \ref{d-3}\right) $. Here we used 
\begin{equation*}
\beta \left( 2,n-\alpha +1\right) =\dint_{0}^{1}t\left( 1-t\right)
^{n-\alpha }dt\ \ \ \text{and \ \ }\dint_{0}^{1}\left( 1-t\right) ^{2\left(
n-\alpha \right) }dt=\frac{1}{2\left( n-\alpha \right) +1}
\end{equation*}

\begin{corollary}
If we choose $x=a,\;y=b\;$and\ $m=1\;$in $\left( \ref{d-3}\right) $%
\begin{equation*}
\left\vert \frac{1}{b-a}f^{\left( n\right) }\left( b\right) -\frac{\left(
-1\right) ^{n}\Gamma \left( n-\alpha +1\right) }{\left( b-a\right)
^{n-\alpha +1}}\left( C_{D_{y^{-}}^{\alpha }}f\right) \left( a\right)
\right\vert
\end{equation*}
\end{corollary}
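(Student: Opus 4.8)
The plan is to obtain this corollary as an immediate specialization of the preceding theorem, that is, of inequality~$\left( \ref{d-3}\right)$, rather than returning to Lemma~2 and redoing the power-mean estimate. Since inequality~$\left( \ref{d-3}\right)$ is valid for every admissible pair $x,y$ with $a\leq x<y\leq b$ and every $m\in\left(0,1\right]$, I would simply instantiate it at the extreme choice $x=a$, $y=b$, together with $m=1$, and then read off both sides.

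First I would substitute these values on the left-hand side term by term. The length $y-x$ becomes $b-a$, so the factor $\frac{1}{y-x}$ turns into $\frac{1}{b-a}$ and the power $\left(y-x\right)^{n-\alpha+1}$ into $\left(b-a\right)^{n-\alpha+1}$; correspondingly the Caputo term $\left(C_{D_{y^{-}}^{\alpha}}f\right)\left(x\right)$ becomes $\left(C_{D_{b^{-}}^{\alpha}}f\right)\left(a\right)$ (the subscript should read $b^{-}$ once $y=b$). This reproduces precisely the left-hand side displayed in the statement.

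Next I would simplify the right-hand side. The leading coefficient $\frac{1}{\left(n-\alpha+1\right)^{1/p}}$ does not involve $x,y,m$ and is left unchanged. With $m=1$ the argument $\frac{y}{m}=y=b$, so $\left\vert f^{(n+1)}\right\vert^{q}\!\left(\frac{y}{m}\right)$ becomes $\left\vert f^{(n+1)}\right\vert^{q}\!\left(b\right)$, and likewise $\left\vert f^{(n+1)}\right\vert^{q}\!\left(x\right)$ becomes $\left\vert f^{(n+1)}\right\vert^{q}\!\left(a\right)$; the two weights $\beta\left(2,n-\alpha+1\right)$ and $\frac{1}{2\left(n-\alpha\right)+1}$ carry over verbatim, as they depend only on $n$ and $\alpha$ and not on the endpoints. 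Collecting these pieces yields exactly the asserted bound.

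The only point deserving a word of care, rather than a genuine obstacle, is verifying that the hypotheses of~$\left( \ref{d-3}\right)$ persist under the specialization: one needs $\left\vert f^{(n+1)}\right\vert^{q}$ to be $m$-convex on $\left[a,b\right]$ in the case $m=1$ (ordinary convexity) and $f^{(n+1)}\in L\left[a,b\right]$, both of which are inherited directly from the theorem's assumptions once $\left[x,y\right]=\left[a,b\right]$. Because nothing beyond substitution is used, no sharpness is lost, and the corollary is exactly the endpoint instance of~$\left( \ref{d-3}\right)$.
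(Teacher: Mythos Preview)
Your proposal is correct and matches the paper's approach exactly: the corollary is obtained simply by substituting $x=a$, $y=b$, and $m=1$ into inequality~$(\ref{d-3})$, with no additional argument required. Your remark that the Caputo subscript should become $b^{-}$ after setting $y=b$ is well taken; the paper's displayed $y^{-}$ is evidently a typographical slip.
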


\begin{equation*}
\leq \left( \frac{1}{\left( n-\alpha +1\right) ^{\frac{1}{p}}}\right) \left(
\left\vert f^{(n+1)}\right\vert ^{q}\left( a\right) \beta \left( 2,n-\alpha
+1\right) +\left\vert f^{(n+1)}\right\vert ^{q}\left( b\right) \frac{1}{%
2\left( n-\alpha \right) +1}\right) ^{\frac{1}{q}}
\end{equation*}

The result in corollary $\left( 3\right) \ $is more general than the result
in corollary $\left( 2\right) .$

\section{CONCLUSION}

Where it is known that a subset of the set of real numbers has an infinite
number of upper bounds. But, the smallest upper bound of the same set is
unique. In terms of optimization theory, the aim is to capture the supremum
of the upper bounds. Inequalities involving both right-sided and left-sided
FC derivatives of non-integer order offer new estimations for integral
inequalities under convex functions. Considering\ $(\ref{d6})$ researchers
working in this field can write the above theorems once for Liouville
derivatives.

\bigskip

\end{document}